\theoremstyle{plain}
\newtheorem{theorem}{Theorem}[section]
\theoremstyle{definition}
\newtheorem{definition}[theorem]{Definition}
\newtheorem{definitions}[theorem]{Definitions}
\newtheorem{note}[theorem]{Note}
\newtheorem{example}[theorem]{Example}
\newtheorem{remark}[theorem]{Remark}
\newtheorem{concluding remark}[theorem]{Concluding remark}
\newtheorem{notations}[theorem]{Notations}
\numberwithin{equation}{theorem}
\date{}
\title{\bf On Union of Regular Near-rings}\vspace{.25 in}
\author{{\bf Rajlaxmi Mukherjee$^1$,} {\bf Tuhin Manna$^2$,} {\bf Kamalika Chakraborty$^3$,} {\bf Sujit Kumar Sardar$^4$}\\
{\tt$^1$ Garhbeta College, Paschim Medinipur-721127, India}\\
{\tt$^{2}$ Krishnagar Government College, Krishnagar, Nadia-741101, India}\\
{\tt$^{3}$ Techno India University, Kolkata-700091, India}\\
{\tt$^{4}$ Jadavpur University, Jadavpur, Kolkata-700032, India}\\
{\tt $^1$ju.rajlaxmi@gmail.com, }
{\tt$^2$iamtuhinmanna@gmail.com,}\\
{\tt$^3$kchakrabortyjumath@gmail.com, }
{\tt$^4$ sksardarjumath@gmail.com}}
\begin{document}
\maketitle
\begin{abstract}
`A semigroup is completely regular if and only if it is a union of groups'- an analogue of this structure theorem of completely regular
semigroup has been obtained in the setting of seminearrings in [\cite{Tuhin2}, Mukherjee (Pal) et al., Semigroup Forum (2018)].
In it, a class of seminearrings (called generalized left completely regular seminearrings, abbreviated as GLCR) has been characterized as a union of near-rings.
This work has been extended in the present article to characterize the seminearrings which are union of various types (regular, completely regular, inverse, Clifford) of regular near-rings.
\end{abstract}
\textbf{Key Words and Phrases:} Regular near-ring; inverse near-ring; comeplety regular near-ring; Clifford near-ring; union of near-rings; generalized left completely regular seminearring; generalized right completely regular seminearring.\\
\textbf{AMS Subject Classification(2010):} 16Y30, 16Y60, 16Y99
\section{Introduction}
A seminearring $(S,+,\cdot)$ consists of two semigroups $(S,+)$ and $(S,\cdot)$ where the multiplication is distributive over addition from one side. Throughout our work it is considered to be distributive from the right.
It is clear from the definition that it generalizes the notion of semiring as well as of near-ring.
It differs with semiring by one distributive property and it differs with near-ring by the existence of additive inverse.
Seminearring is important not only from its own interesting feature as common generalization of semiring and near-ring (see \cite{Golan book,I-congruence,KK1,Pilz,Weinert}) but also from its various applications (see \cite{automata,Baeten,BW,Boyket1,Boykett,desh,dro,KK4}). It is well known that the process algebra is an active area of research in computer science.
From last century, many process algebras have been formulated, extended with data, time, mobility, probability and stochastic (see \cite{Baeten, BW}).
A process algebra is based upon seminearrings where `+' is idempotent and commutative.
Seminearring is also a useful tool in the study of reversible computation \cite{Boykett}.
It also appears in generalized linear sequential machines.
In \cite{KK4}, the authors obtained a necessary condition to test the minimality of the machines using $\alpha$-radicals.
Desharnais and Struth \cite{desh}, Droste et al. \cite{dro}, Armstrong et al. \cite{automata}, Rivas et al. \cite{rivas}, Jenila et al. \cite{JD} utilized the concept of seminearring in various applications.

One aspect of developing the seminearring theory is to manipulate with additive semigroup structure so as to investigate its difference from a near-ring.
This aspect is more or less motivated by the similar study of semirings to find its gap from rings (see \cite{MKS}).
The said aspect has been dealt with the help of nice structure theorems of semigroup theory resulting into \cite{RMCR,Tuhin1,Tuhin2,SKS}.
In \cite{Tuhin2}, among other things, the analogue of ``a semigroup is completely regular if and only if it is a union of groups" was obtained.
In \cite{RMCR}, the analogue of ``a semigroup is completely regular if and only if it is a semilattice of completely simple semigroups" was obtained.
While passing from semigroup theory to seminearring theory, we follow the dictionary given below.
\\
$\begin{array}{cc}
  \underline{\textbf{Semigroup}} & ~~~~~~~~~~~~~~~~~~~~~~~~~\underline{\textbf{Seminearring}}\\
  semilattice &  ~~~~~~~~~~~~~~~~~~~~~~~~~~bisemilattice\\
  completely~~ simple &  ~~~~~~~~~~~~~~~~~~~~~~~~~~ (left)~~ right~~ completely~~ simple \\
  union~~ of~~ groups & ~~~~~~~~~~~~~~~~~~~~~~~~~~ union~~ of~~ near-rings \\
  completely~~ regular & ~~~~~~~~~~~~~~~~~~~~~~~~~~~(generalized)~~ left~~(right)~~completely~~ regular.
\end{array}$
\\
\\
 In semigroups, both union of groups and semilattice of completely simple semigroups represent the same class of semigroups which is completely regular semigroups. So according to the above dictionary, both union of near-rings and bisemilattice of left completely simple seminearrings are expected to characterize the left completely regular seminearring (abbreviated as LCR, see Definitions \ref{def1}). But Examples 2.8 and 2.17 \cite{Tuhin2} show that the above two classes do not land in the same place. So the two classes were given different names. The bisemilattice of left completely simple seminearrings are called (Theorem 2.23 \cite{RMCR}) the left completely regular (LCR) seminearrings and the union of near-rings are called (Theorem 2.19 \cite{Tuhin2}) the generalized left completely regular seminearrings (abbreviated as GLCR, see Definitions \ref{def1}). The lack of one distributive property in a near-ring prevents zero to be absorbing from both sides. The near-rings, in which zero is absorbing, are called zero-symmetric. The union of zero-symmetric near-rings represents (Theorem 2.25 \cite{Tuhin2}) the class of seminearings which are simultaneously GLCR and GRCR (abbreviation for generalized right completely regular seminearring, see Definitions \ref{def1}). Note here that what is GLCR for right distributive seminearrings that is GRCR for left distributive seminearrings.\\
 The present article is a continuation of the study of GLCR seminearrings ({\it i.e.}, union of near-rings) accomplished in \cite{Tuhin2}. The study is initiated by an attempt to investigate the situation that occurs on replacing the component near-rings of GLCR seminearrings by regular near-rings. In this direction, the first easy observation is that the seminearrings represented by union of regular near-rings are multiplicatively regular (Note that by definition a near-ring is additively regular but by definition a seminearring is neither additively regular nor multiplicatively regular) {\it i.e.}, a GLCR seminearring, whose component near-rings are regular, is multiplicatively regular.
 But the converse is not true. In Example \ref{multiplicatively regular GLCR seminearring}, we provide a GLCR seminearring which is multiplicatively regular but not all component near-rings are regular. In Example \ref{example multiplicatively regular GLCR semiring}, we illustrate that the situation does not improve even after adding the condition of being zero-symmetric. This prompts us to obtain Theorem \ref{theorem mult reg} where we find necessary and sufficient conditions, on GLCR multiplicatively regular seminearings, for component near-rings to be regular. Surprisingly the situation becomes nice when we put restriction on regularity of component near-rings.
 In Theorem \ref{theorem mult inverse} we establish that a GLCR siminearring is multiplicatively inverse if and only if each component near-ring is inverse ({\it i.e.}, multiplicative reduct of the near-ring is an inverse semigroup). The situation is exactly similar for Clifford ({\it cf.} Theorem \ref{theorem Clifford}) and completely regular case ({\it cf.} Theorem \ref{theorem 1}).

\section{Main Results}

\begin{definition}\label{definition of additively regular, inverse seminearring, multiplicatively regular and inverse seminearring}
Suppose $(S,+,\cdot)$ is a seminearring.
Then $S$ is said to be an {\it additively regular seminearring} ({\it additively inverse seminearring}) if $(S,+)$ is a regular semigroup (resp. an inverse semigroup)
({\it cf.} \cite{I-congruence, RMCR}).
Similar are the definitions of {\it additively completely regular}, {\it additively completely simple} and {\it additively Clifford seminearrings} ({\it cf.} \cite{RMCR}).
For semigroup theoretic counterparts of these notations we refer to \cite{Reilley completely regular}.

$S$ is said to be a {\it multiplicatively regular seminearring} ({\it multiplicatively inverse seminearring}) if $(S,\cdot)$ is a regular semigroup (resp. an inverse semigroup)
({\it cf.} \cite{I-congruence}).

We define $S$ to be a {\it multiplicatively completely regular seminearring} if the multiplicative reduct of $S$ is completely regular. We define the notion of {\it multiplicatively Clifford seminearrings} analogously.
\end{definition}

\begin{note}\label{note}
A near-ring is always not only an additively regular, but also an additively inverse as well as an additively completely regular and an additively Clifford seminearring.
So in what follows, we call a near-ring, whose multiplicative reduct is an inverse or a completely regular or a Clifford semigroup, to be an {\it inverse near-ring} or a {\it completely regular near-ring} or a {\it Clifford near-ring}, respectively, instead of calling them a {\it multiplicatively inverse near-ring} or a {\it multiplicatively completely regular near-ring} or a {\it multiplicatively Clifford near-ring}.
\end{note}

\begin{notations}\label{notation1}
Throughout this paper, unless mentioned otherwise,
\begin{itemize}
\item[(i)] for a seminearring $S$, $E^+(S)$ denotes the set of all additive idempotents;
\item[(ii)] in an additively completely regular seminearring $S$, for $a\in S$, an element $x\in S$ satisfying $a+x+a=a$ and $a+x=x+a$
is denoted by $x_{a}$;
\item[(iii)] $\mathcal{L}^+$, $\mathcal{R}^+$, $\mathcal{H}^+$  and $\mathcal{J}^+$ denote the Green's relations $\mathcal{L}$, $\mathcal{R}$, $\mathcal{H}$ and $\mathcal{J}$ on the semigroup $(S, +)$, the additive reduct of the seminearring $S$;
\item[(iv)] in a seminearring $S$, $\mathcal{L}^+_a$, $\mathcal{R}^+_a$, $\mathcal{H}^+_a$  and $\mathcal{J}^+_a$ denote the $\mathcal{L}^+$, $\mathcal{R}^+$, $\mathcal{H}^+$  and $\mathcal{J}^+$ classes of $a\in S$, respectively;
\item[(v)] in an additively completely regular seminearring $S$, for each $a\in S$, $(\mathcal{H}_{a}^+,+)$ is a group. The identity element of this group is denoted by $0_{\mathcal{H}_{a}^+}$.
\end{itemize}
\end{notations}

\begin{definitions}\label{def1}\cite{Tuhin2}
An additively completely regular seminearring $(S, +, \cdot)$ is called a {\it left completely regular (LCR)} ({\it right completely regular (RCR)}) if 
\begin{itemize}
\item[(i)] for each $a \in S$ there exists an $x_{a}\in S$ ({\it cf.} Notation \ref{notation1}$(ii)$) satisfying
$(a + x_{a})a$ $=$ $a+ x_{a}$ (resp., $a(a + x_{a})$ $=$ $a+ x_{a}$), and
\item[(ii)] $be$ $\mathcal{J}^{+}$ $eb$ for all $b\in S$ and for all $e\in E^{+}(S)$.
\end{itemize}
If we remove condition $(ii)$ from the definition of a left completely regular (LCR) (resp. right completely regular (RCR)) seminearring then what we get is called a generalized left completely regular (GLCR) (resp. generalized right completely regular (GRCR)) seminearring.
\end{definitions}

In Theorem $2.19$ \cite{Tuhin2}, generalized left completely regular (GLCR) seminearrings are characterized as union of near-rings. In Theorem $2.25$ \cite{Tuhin2}, the seminearrings which are GLCR as well as GRCR are characterized as union of zero-symmetric near-rings. It is clear that if the component near-rings are regular then the multiplicative reduct of the seminearring is regular. We provide below two examples (the second one is the zero-symmetric case) illustrating that the converse is not true {\it i.e.}, the seminearring is multiplicatively regular but not all component near-rings are regular.

\begin{example}\label{multiplicatively regular GLCR seminearring}
Consider `$+$' on $T=\{u,a,b,c\}$ defined as follows
\begin{center} 
\begin{tabular}{|c|c c c c|}
  \hline
  $+$ & $u$ & $a$ & $b$ & $c$\\
  \hline
  $u$ & $u$ & $a$ & $b$ & $c$\\

  $a$ & $a$ & $a$ & $a$ & $a$\\

  $b$ & $b$ & $b$ & $b$ & $b$\\

  $c$ & $c$ & $b$ & $a$ & $u$\\
  \hline
\end{tabular}
\end{center}
$(T,+)$ is the semigroup considered in Example $2.8$ of \cite{Tuhin2}.
Let us define `$\ast$' on $T$ by $x\ast y=x$ for all $x$, $y\in T$.
It is a matter of routine verification to check that $(T,+,\ast)$ is a GLCR seminearring which is also multiplicatively regular.
Consider the regular ring $(M_{2}(\mathbb{R}), +, \cdot)$ and one of its right ideal $I$ $=$ $\Big\{\begin{pmatrix}
             a & b \\
             0 & 0
        \end{pmatrix}$ $|a, b \in \mathbb{R}\Big\}$.
Now let us define $S$ $=$ $S_{1}\cup S_{2} \cup S_{3}$ $\subset$ $T \times M_{2}(\mathbb{R})$ (seminearring direct product of $T$ and $M_{2}(\mathbb{R})$), where $S_{1}$ $=$ $\{u, c\}\times I$, $S_{2}$ $=$ $\{a\}\times M_{2}(\mathbb{R})$ and $S_{3}$ $=$ $\{b\}\times M_{2}(\mathbb{R})$. Clearly $S_{1}$, $S_{2}$ and $S_{3}$ are near-rings. In view of Theorem $2.19$ of \cite{Tuhin2}, $S$ is a GLCR seminearring.
Let $s \in S$. Suppose $s \in S_{2}$ or $s \in S_{3}$. Then $s =(x,A)$ for some $A\in M_{2}(\mathbb{R})$ and $x \in \{a, b\}$. Since $M_{2}(\mathbb{R})$ is regular, there exists $B \in M_{2}(\mathbb{R})$ such that $ABA=A$. Then $t= (x, B) \in S$ satisfying $sts=s$. Now if $s\in S_{1}$ then $s = (y,X)$ where $X \in I$ and $y \in \{u, c\}$.
Let $X= \begin{pmatrix}
          a & b \\
          0 & 0
       \end{pmatrix}$ where $a, b \in \mathbb{R}$.
Now if $a\neq 0$, take $t= (y,Y)$ where
$Y = \begin{pmatrix}
     \frac{1}{a} & 0 \\
               0 & 0
       \end{pmatrix}$.
Then $t \in S$ satisfying $sts=s$. Similarly if $b\neq 0$, take $t= (y_{1},Z)$ where
$Z = \begin{pmatrix}
     0 & 0 \\
     \frac{1}{b} & 0
\end{pmatrix}$ and $y_{1} \in \{a, b\}$. Then $t \in S$ satisfying $sts=s$. Hence $S$ is multiplicatively regular.
Let $s_{1}= (y, C)\in S_{1}$ where $C=\begin{pmatrix}
 0 & 1 \\
  0 & 0
\end{pmatrix}$ and $y \in \{u, c\}$.
 Now for any $D=\begin{pmatrix}
                   a & b \\
                   0 & 0
                 \end{pmatrix}\in I$,
$CDC$ $=$ $\begin{pmatrix}
         0 & 0 \\
         0 & 0
    \end{pmatrix}$.
 Thus there is no $D\in I$ which satisfy $CDC=C$. Hence there exists no such $t \in S_{1}$ such that $s_{1}ts_{1}=s_{1}$. So, $S_{1}$ is not a multiplicatively regular near-ring.
\end{example}
\begin{example}\label{example multiplicatively regular GLCR semiring}
Let us define `$+$' on $L=\{\alpha, \beta\}$ as follows.
\begin{center}
\begin{tabular}{c|c c }

  $+$ & $\alpha$ & $\beta$\\
  \hline
  $\alpha$ & $\alpha$ & $\beta$\\

  $\beta$ & $\beta$ & $\beta$\\

\end{tabular}
\end{center}
$(L,+)$ is nothing but a semilattice with two elements.
Let us define `$\ast$' on $L$ by $x\ast y=x$ for all $x$, $y\in L$. It is a matter of routine verification that $(L,+,\ast)$ is a semiring which is also multiplicatively regular.
Consider the regular ring $(M_{2}(\mathbb{R}), +, \cdot)$ and one of its right ideal $I$ $=$ $\Big\{\begin{pmatrix}
     a & b \\
     0 & 0
    \end{pmatrix}|a, b \in \mathbb{R}\Big\}$.
Now let us define $S$ $=$ $T_{1}\cup T_{2}$ $\subset$ $L \times M_{2}(\mathbb{R})$ (semiring direct product of $L$ and $M_{2}(\mathbb{R})$), where $T_{1}$ $=$ $\{\alpha\}\times I$ and $T_{2}$ $=$ $\{\beta\}\times M_{2}(\mathbb{R})$.
Clearly $T_{1}$ and $T_{2}$ are skew-rings.
In view of Theorem $1.4$ of \cite{MKS}, $S$ is a completely regular semiring.

Let $s \in S$.
Suppose $s \in T_{2}$. Then $s =(\beta,A)$ for some $A\in M_{2}(\mathbb{R})$.
Since $M_{2}(\mathbb{R})$ is regular so there exists $B \in M_{2}(\mathbb{R})$ such that $ABA=A$.
Let us consider $t= (\beta, B)$, then $t \in S$ satisfying $sts=s$.
Now if $s\in T_{1}$ then $s = (\alpha,X)$ where $X \in I$.
Let $X= \begin{pmatrix}
          a & b \\
          0 & 0
       \end{pmatrix}$ where $a, b \in \mathbb{R}$.
Now if $a\neq 0$, take $t= (\alpha,Y)$ where
$Y = \begin{pmatrix}
     \frac{1}{a} & 0 \\
               0 & 0
       \end{pmatrix}$. Then $t \in S$ satisfying $sts=s$. Similarly if $b\neq 0$, take $t= (\beta,Z)$ where
$Z = \begin{pmatrix}
                        0 & 0 \\
               \frac{1}{b} & 0
       \end{pmatrix}$. Then $t \in S$ satisfying $sts=s$. Hence $S$ is multiplicatively regular.

 Let $s_{1}= (\alpha, C)\in T_{1}$ where $C=\begin{pmatrix}
     0 & 1 \\
     0 & 0
  \end{pmatrix}$.
Then there exists no such $t \in T_{1}$ such that $s_{1}ts_{1}=s_{1}$. So, $T_{1}$ is not a multiplicatively regular skew-ring.
\end{example}
\begin{remark}
    In each of the above examples, we exhibit multiplicatively regular GLCR seminearring having one non-regular $\mathcal{H}^{+}$ class.
It may appear that each of those seminearrings may have some other decomposition into union of near-rings such that each component near-ring is regular.
But semigroup theoretic argument on the additive structure of a GLCR seminearring $S$ ensures that whatever be the decomposition of $S$ into union of near-rings, each of the component near-rings must be some $\mathcal{H}^{+}$-class of $S$.
\end{remark}
Now it becomes relevant to obtain necessary and sufficient conditions for the component near-rings of a GLCR seminearring to be regular. This is accomplished in the following two results.

\begin{theorem}\label{theorem mult reg}
Let $S$ be a seminearring. Then the following statements are equivalent:
  \begin{itemize}
    \item [(1)] $S$ is generalized left completely regular (GLCR) in which for each $a\in S$ there exists $b\in S$ such that
      ($A1$) $aba=a$,
      ($A2$) $bab=b$ and
      ($A3$) $(a+a^*)b$, $b(a+a^*)\in \mathcal{H}_{ba}^+$,
    where according to Lemma 2.22 of \cite{Tuhin2}, for each $a\in S$, $a^*$ denotes the unique element in $\mathcal{H}_{a}^+$ such that $a + a^* + a$ $=$ $a$, $a + a^*$ $=$ $a^* + a$ and $(a + a^*)a$ $=$ $a + a^*$
    \item [(2)] Every $\mathcal{H}^+$ -class is a regular near-ring;
    \item [(3)] $S$ is a union (disjoint) of regular near-rings.
  \end{itemize}
\end{theorem}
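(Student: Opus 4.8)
The plan is to prove the cycle of implications by leaning on Theorem $2.19$ of \cite{Tuhin2}, which identifies GLCR seminearrings with disjoint unions of near-rings whose components are exactly the additive $\mathcal{H}^+$-classes. The one structural fact I will use repeatedly is that each such component near-ring is additively a group, hence is a single $\mathcal{H}^+$-class; consequently ``$z$ lies in the component near-ring of $a$'' is synonymous with $z\in\mathcal{H}_a^+$, and the additive identity of that group is the idempotent $e_a:=a+a^*=0_{\mathcal{H}_a^+}$ supplied by Lemma $2.22$ of \cite{Tuhin2}, which moreover satisfies $e_a a=e_a$.

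With this in hand the equivalence $(2)\Leftrightarrow(3)$ is immediate: the $\mathcal{H}^+$-classes partition $S$, so ``every $\mathcal{H}^+$-class is a regular near-ring'' says precisely that $S$ is a disjoint union of regular near-rings; conversely a disjoint union of regular near-rings is in particular a union of near-rings, whence GLCR by Theorem $2.19$, and the preceding Remark forces the components to be the $\mathcal{H}^+$-classes. The implication $(2)\Rightarrow(1)$ is the easy half of the hard equivalence: given $a$, regularity of the near-ring $N=\mathcal{H}_a^+$ yields $c\in N$ with $aca=a$, and the standard substitution $b:=cac\in N$ delivers ($A1$) and ($A2$); ($A3$) is then automatic, because $b\in N$ forces $ba\in N$, hence $\mathcal{H}_{ba}^+=N$, while $e_a$, $e_a b$ and $be_a$ all lie in $N$ since $N$ is closed under multiplication.

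The substance of the theorem is $(1)\Rightarrow(2)$. Here $S$ is GLCR, so each $\mathcal{H}^+$-class is already a near-ring by Theorem $2.19$; what must be shown is that the von Neumann inverse $b$ furnished by ($A1$)--($A2$) can be replaced by an inverse of $a$ lying inside $\mathcal{H}_a^+$. Two preliminary observations drive this. First, right distributivity gives, for any additive idempotent $e$ and any $s\in S$, that $(e+e)s=es+es$ with $e+e=e$, so $es$ is again an additive idempotent; in particular $e_a b$ is an additive idempotent, and $e_a$ is itself multiplicatively idempotent, $e_ae_a=e_a$ (again by right distributivity). Second, since each $\mathcal{H}^+$-class is a group it contains a unique additive idempotent, namely its identity. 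Combining these with ($A3$), which places $e_a b$ in $\mathcal{H}_{ba}^+$, forces the clean identity $e_a b=0_{\mathcal{H}_{ba}^+}$ and likewise pins down the location of $be_a$. I would then manufacture the required inverse by a purely multiplicative, right-sided correction of $b$, checking both $aza=a$ and membership $z\in\mathcal{H}_a^+$ via the idempotent criterion $e_a+z=z+e_a=z$, reducing each verification to an application of right distributivity together with $e_a a=e_a$ and the identities just derived.

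The main obstacle is exactly the absence of left distributivity. It rules out the naive moves: one cannot correct $b$ additively and expand $a(b+d)$, nor expand $(ab)(a+a^*)$, so every simplification of $aza$ must keep the leftmost factor a single product $a\cdot(\cdot)$ rather than $a\cdot(\text{sum})$. This is precisely why the hypothesis ($A3$) is phrased through the products $e_a b$ and $be_a$: these are the only combinations whose $\mathcal{H}^+$-location is controllable under one-sided distributivity, and the crux of the argument is to show that this control suffices to slide the inverse into the component near-ring $\mathcal{H}_a^+$.
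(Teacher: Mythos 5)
Your treatment of the easy implications is sound and agrees with the paper: $(2)\Leftrightarrow(3)$ via Theorem 2.19 of \cite{Tuhin2} and the Remark pinning the components to $\mathcal{H}^+$-classes, and $(2)\Rightarrow(1)$ via $b:=cac$ with ($A3$) automatic by closure of the component near-ring. But the core implication $(1)\Rightarrow(2)$ is not actually proved in your proposal. You correctly observe that $e_ab=(a+a^*)b$ is an additive idempotent and, by uniqueness of additive idempotents in each class, that $e_ab=0_{\mathcal{H}_{ba}^+}$; then you announce an unspecified ``right-sided correction of $b$'' and stop. No candidate $z$ is written down and no computation of $aza$ or of the location of $z$ is performed, so the decisive content is missing. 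What must be shown (and what the paper shows) is that no correction is needed: $b$ itself lies in $\mathcal{H}_a^+$. The paper's chain uses \emph{both} halves of ($A3$) together with the right-congruence property of $\mathcal{H}^+$: from $(a+a^*)b \,\mathcal{H}^+\, b(a+a^*)$ one gets $(a+a^*)ba \,\mathcal{H}^+\, b(a+a^*)a = b(a+a^*) \,\mathcal{H}^+\, (a+a^*)b = (a+a^*)ab$ (using $(a+a^*)a=a+a^*$); both $(a+a^*)ba$ and $(a+a^*)ab$ are additive idempotents in one class, hence \emph{equal}, giving $(a+a^*)b=(a+a^*)ba=((a+a^*)a)ba=(a+a^*)(aba)=a+a^*$. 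Thus $0_{\mathcal{H}_{ba}^+}=e_ab=e_a$, so $ba\,\mathcal{H}^+\,a$; right-multiplying by $b$ and using ($A2$) gives $b=bab\,\mathcal{H}^+\,ab\,\mathcal{H}^+\,a$. You never identify $\mathcal{H}_{ba}^+$ with $\mathcal{H}_a^+$ — the step for which $b(a+a^*)\in\mathcal{H}_{ba}^+$ is indispensable — and your sketch makes no use of that half of ($A3$) beyond restating it.

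A secondary flaw: the membership test you propose, ``$z\in\mathcal{H}_a^+$ iff $e_a+z=z+e_a=z$,'' is necessary but not sufficient in a general union of groups (take a two-element additive semilattice $e+f=f+e=f$: then $e+f=f+e=f$ while $f\notin\mathcal{H}_e^+$), so even if you had exhibited a corrected inverse $z$, verifying its membership this way would not be valid. Likewise your parenthetical that $e_ae_a=e_a$ follows ``by right distributivity'' alone is incomplete — right distributivity only makes $e_ae_a$ an additive idempotent; you also need multiplicative closure of the near-ring $\mathcal{H}_a^+$ plus uniqueness of its additive idempotent. These can be repaired, but the missing derivation $(a+a^*)b=a+a^*$ and the consequent $b\,\mathcal{H}^+\,a$ constitute a genuine gap: without them, condition ($A3$) is never actually exploited and the theorem's substance is unestablished.
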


\begin{proof}
  $(1)\Rightarrow (2)$:
  Suppose $(1)$ holds.
  Then by Theorem $2.19$ of \cite{Tuhin2} every $\mathcal{H}^+$ -class is a near-ring.
  Let $a\in S$. Then there exists $b\in S$ satisfying the conditions $(A1)$, $(A2)$ and $(A3)$.
  Then as $\mathcal{H}^{+}$ is a right congruence ({\it cf.} Lemma 2.20 of \cite{Tuhin2}),
  in view of $(A3)$ we get $(a+a^*)ba$~~$\mathcal{H}^+$~~$b(a+a^*)a$,
  {\it i.e.}, $(a+a^*)ba$~~$\mathcal{H}^+$~~$b(a+a^*)$ (since $(a+a^*)a=a+a^*$). Hence in view of $(A3)$, $(a+a^*)ba$~~$\mathcal{H}^+$~~$(a+a^*)b$,
  {\it i.e.}, $(a+a^*)ba$~~$\mathcal{H}^+$~~$(a+a^*)ab$ (as $(a+a^*)a=a+a^*$).
  Since both $(a+a^*)ba$, $(a+a^*)ab$ are additive idempotents and every $\mathcal{H}^+$ -class is a near-ring, we have $(a+a^*)ba$ = $(a+a^*)ab=(a+a^*)b$ (using $(a+a^*)a=a+a^*$).
  Again as $(a+a^*)a=a+a^*$ and $aba=a$, we deduce that $(a+a^*)b$ = $(a+a^*)ba$ = $((a+a^*)a)ba$ = $(a+a^*)a$ = $a+a^*\in \mathcal{H}_{a}^+$.
  Hence $(a+a^*)b$~~$\mathcal{H}^+$~~$a$.
  Now as $(a+a^*)$~~$\mathcal{H}^+$~~$a$ and $\mathcal{H}^{+}$ is a right congruence, $(a+a^*)b$~~$\mathcal{H}^+$~~$ab$, whence $ab$~~$\mathcal{H}^+$~~$a$.
  Again in view of $(A3)$ and the fact that $(a+a^*)b$~~$\mathcal{H}^+$~~$a$, we see that $ba$~~$\mathcal{H}^+$~~$a$.
  Hence $bab$~~$\mathcal{H}^+$~~$ab$,
  {\it i.e.}, $b$~~$\mathcal{H}^+$~~$ab$ using $(A2)$. So $a$~~$\mathcal{H}^+$~~$b$.
  Thus $\mathcal{H}_{a}^+$ becomes a regular near-ring following Definition 9.153 \cite{Pilz}.
$(2)$ implies $(3)$ trivially.
$(3)\Rightarrow (1)$: Let $S$ be union (disjoint) of regular near-rings $\{N_\alpha:\alpha\in\Lambda\}$.
In view of Theorem 2.19 of \cite{Tuhin2} and regularity of each $(N_\alpha,\cdot)$, it follows that $S$ is a multiplicatively regular, GLCR seminearring.
Let $a\in S$. Then $a\in N_\beta$ for some $\beta\in\Lambda$.
As $N_\beta$ is a regular near-ring, there exists $b\in N_\beta$ such that $aba=a$, $bab=b$.
Also $\mathcal{H}_{a}^+$ = $\mathcal{H}_{b}^+$ = $N_\beta$.
Now in view of Lemma 2.22 of \cite{Tuhin2}, there exists unique $a^*$, $b^*\in N_\beta$ such that $a+a^*$ = $a^*+a$ = $b^*+b$ = $b+b^*$ = $0_{_{N_{\beta}}}$ (where $0_{_{N_{\beta}}}$
denotes the additive identity of $N_{\beta}$).
Thus $ba$, $(a+a^*)b$ and $b(a+a^*)$, being members of $N_\beta$, are all related under the relation $\mathcal{H}^+$.
\end{proof}

The following result is the zero-symmetric counterpart of Theorem \ref{theorem mult reg} whose proof follows from Theorem 2.25 \cite{Tuhin2} and Theorem \ref{theorem mult reg}.

\begin{theorem}\label{theorem mult reg zero symm version}
Let $S$ be a seminearring. Then the following statements are equivalent:
  \begin{itemize}
    \item [(1)] $S$ is both generalized left completely regular (GLCR) and generalized right completely regular (GRCR) in which for each $a\in S$ there exists $b\in S$ such that
      ($A1$) $aba=a$,
      ($A2$) $bab=b$, and
      ($A3$) $(a+a^*)b$, $b(a+a^*)\in \mathcal{H}_{ba}^+$.
    \item [(2)] Every $\mathcal{H}^+$ -class is a zero-symmetric regular near-ring;
    \item [(3)] $S$ is a union (disjoint) of zero-symmetric regular near-rings.
  \end{itemize}
\end{theorem}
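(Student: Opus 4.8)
The plan is to establish the three equivalences by layering two independent characterizations on top of one another, since statement $(1)$ here differs from statement $(1)$ of Theorem \ref{theorem mult reg} only by the additional requirement that $S$ be GRCR, while the regularity conditions $(A1)$--$(A3)$ are carried over verbatim. The governing observation is that zero-symmetry and multiplicative regularity of the component near-rings decouple cleanly: the passage from GLCR to ``GLCR and GRCR'' is precisely what Theorem $2.25$ of \cite{Tuhin2} controls (union of zero-symmetric near-rings), whereas conditions $(A1)$--$(A3)$ imposed over a GLCR seminearring are precisely what Theorem \ref{theorem mult reg} converts into regularity of every $\mathcal{H}^+$-class.

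For $(1)\Rightarrow(2)$, I would first invoke Theorem $2.25$ of \cite{Tuhin2}: since $S$ is both GLCR and GRCR, each $\mathcal{H}^+$-class is a zero-symmetric near-ring. Next, temporarily discarding the GRCR hypothesis, $S$ still satisfies condition $(1)$ of Theorem \ref{theorem mult reg}, so by the implication $(1)\Rightarrow(2)$ there, every $\mathcal{H}^+$-class is a regular near-ring. Superposing the two conclusions on the same family of $\mathcal{H}^+$-classes yields that each is a zero-symmetric regular near-ring. The implication $(2)\Rightarrow(3)$ is immediate, exactly as in Theorem \ref{theorem mult reg}, once one recalls that $S$ is the disjoint union of its $\mathcal{H}^+$-classes.

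For $(3)\Rightarrow(1)$, suppose $S=\bigcup_{\alpha\in\Lambda}N_\alpha$ with each $N_\alpha$ a zero-symmetric regular near-ring. Applying Theorem $2.25$ of \cite{Tuhin2} to the zero-symmetric data gives that $S$ is simultaneously GLCR and GRCR. Applying the implication $(3)\Rightarrow(1)$ of Theorem \ref{theorem mult reg} to the regular data produces, for each $a\in S$, an element $b$ lying in the same component and satisfying $(A1)$, $(A2)$ and the $\mathcal{H}^+$-membership conditions $(A3)$. These two outputs together constitute exactly statement $(1)$.

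The only point requiring care --- and the place I expect the main (though mild) obstacle --- is the bookkeeping that the two cited theorems speak about the \emph{same} decomposition of $S$. Both Theorem $2.25$ of \cite{Tuhin2} and Theorem \ref{theorem mult reg} identify the component near-rings canonically with the $\mathcal{H}^+$-classes of $S$, as noted in the Remark following Example \ref{example multiplicatively regular GLCR semiring}, so there is no ambiguity: the zero-symmetry conclusion and the regularity conclusion attach to one and the same near-ring structure on each $\mathcal{H}^+_a$. Once this identification is made explicit, the equivalences follow by direct combination, with no fresh computation beyond what the two parent theorems already supply.
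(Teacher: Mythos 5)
Your proposal is correct and follows exactly the route the paper intends: the paper gives no separate proof for this theorem, stating only that it ``follows from Theorem 2.25 \cite{Tuhin2} and Theorem \ref{theorem mult reg},'' which is precisely your layered combination. Your extra remark that both parent results attach canonically to the same $\mathcal{H}^+$-class decomposition is a worthwhile clarification the paper leaves implicit.
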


Now we put different types of restrictions on regularity and see that the situation becomes nice compared to regularity case in the sense that the respective regularity of multiplicative structure of a GLCR seminearring implies and implied by that of the component near-rings. 
As a first result in this direction the following theorem records that the multiplicative reduct of a GLCR seminearring is an inverse semigroup if and only if each component near-ring is (multiplicatively) inverse. 

\begin{theorem}\label{theorem mult inverse}
Let $S$ be a seminearring. Then the following statements are equivalent:
  \begin{itemize}
    \item [(1)] $S$ is generalized left completely regular (GLCR) as well as multiplicatively inverse seminearring;
    \item [(2)] Every $\mathcal{H}^+$ -class is an inverse near-ring;
    \item [(3)] $S$ is a union (disjoint) of inverse near-rings.
  \end{itemize}
\end{theorem}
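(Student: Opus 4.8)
The overall strategy is to run the cycle $(1)\Rightarrow(2)\Rightarrow(3)\Rightarrow(1)$, leaning on Theorem \ref{theorem mult reg} to supply the \emph{regular near-ring} backbone and upgrading \emph{regular} to \emph{inverse} by a single semigroup-theoretic observation: a regular subsemigroup $T$ of an inverse semigroup is automatically an inverse subsemigroup. Indeed, for $t\in T$ regularity gives a semigroup inverse $t'\in T$ with $tt't=t$ and $t'tt'=t'$, and uniqueness of inverses in the ambient inverse semigroup forces $t'=t^{-1}$; thus $T$ is closed under the inverse map, and its idempotents, being idempotents of the ambient semigroup, commute. The implication $(2)\Rightarrow(3)$ is immediate (the $\mathcal{H}^+$-classes are pairwise disjoint near-rings covering $S$), while $(3)\Rightarrow(2)$ is read off from the Remark preceding the theorem, which guarantees that every component near-ring of a decomposition is one of the $\mathcal{H}^+$-classes. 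Hence the entire content is the equivalence $(1)\Leftrightarrow(2)$.

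For $(1)\Rightarrow(2)$, suppose $S$ is GLCR and multiplicatively inverse. By Theorem $2.19$ of \cite{Tuhin2} each $\mathcal{H}^+$-class is a near-ring, and multiplicative inverseness makes $S$ multiplicatively regular. Fix $a\in S$ and let $b=a^{-1}$ be its unique multiplicative inverse, so that $(A1)$ and $(A2)$ hold automatically. The plan is to verify $(A3)$ for this $b$ and then quote $(1)\Rightarrow(2)$ of Theorem \ref{theorem mult reg}, whose proof already shows that $(A1)$--$(A3)$ force $a\,\mathcal{H}^+\,b$, i.e.\ $a^{-1}\in\mathcal{H}^+_a$; every $\mathcal{H}^+$-class is then multiplicatively regular, and by the bridging observation it is an inverse near-ring. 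Writing $a+a^{*}$ for the additive identity $0_{\mathcal{H}_a^+}$ of $\mathcal{H}^+_a$ and using that $\mathcal{H}^+$ is a right congruence (Lemma $2.20$ of \cite{Tuhin2}) together with $(a+a^{*})a=a+a^{*}$ (Lemma $2.22$ of \cite{Tuhin2}), one checks that condition $(A3)$ reduces to the two assertions $aa^{-1}\,\mathcal{H}^+\,a^{-1}a$ and $a^{-1}(a+a^{*})\,\mathcal{H}^+\,a^{-1}a$. These are the heart of the matter: the idempotents $aa^{-1}$ and $a^{-1}a$ are only \emph{multiplicatively} $\mathcal{D}$-related in general, so the work is to promote this to \emph{additive} $\mathcal{H}^+$-equivalence, which I expect to accomplish by combining the right-congruence property, the relations $(x+x^{*})x=x+x^{*}$ for $x\in\mathcal{H}^+_a$, and the commutativity of the idempotents of $(S,\cdot)$.

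For $(2)\Rightarrow(1)$ (equivalently $(3)\Rightarrow(1)$), each $\mathcal{H}^+$-class is an inverse near-ring, hence a regular near-ring, so Theorem \ref{theorem mult reg} already yields that $S$ is GLCR and multiplicatively regular. It remains to upgrade multiplicative regularity of $(S,\cdot)$ to inverseness, that is, to show that the idempotents of $(S,\cdot)$ commute (equivalently, that inverses are globally unique). Idempotents lying in a common $\mathcal{H}^+$-class commute because that class is inverse; the genuine difficulty is two idempotents $g$ and $h$ from \emph{different} classes. Here the available tools are again that $\mathcal{H}^+$ is a right congruence, that $E^+(S)=\{a+a^{*}:a\in S\}$ is closed under multiplication (each $a+a^{*}$ is a multiplicative idempotent, and a product of additive idempotents is an additive idempotent), and the left-zero relations $(a+a^{*})x=a+a^{*}$ for $x\in\mathcal{H}^+_a$; the plan is to show that the per-class inverse hypothesis forces this band of additive idempotents to be a semilattice and, more generally, forces $gh=hg$.

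The main obstacle, present in both directions, is one and the same: multiplication is distributive only on the right, so $\mathcal{H}^+$ is merely a \emph{right} congruence and there is no mechanism to transport $\mathcal{H}^+$-equivalence across a fixed left factor. Consequently the purely multiplicative inverse structure (in which $aa^{-1}$ and $a^{-1}a$ sit in possibly distinct $\mathcal{H}^+$-classes a priori, and idempotents from distinct classes multiply in an uncontrolled way) must be reconciled with the additive $\mathcal{H}^+$-decomposition using only right-sided tools together with the per-class inverse hypothesis. Verifying $(A3)$ for $b=a^{-1}$ in the forward direction, and establishing cross-class commutativity of idempotents in the converse, are the two incarnations of this single difficulty, and I expect essentially all of the effort to be concentrated there.
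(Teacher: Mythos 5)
Your forward direction is, in outline, exactly the paper's proof: the paper also takes $b$ to be the unique multiplicative inverse of $a$, observes $(A1)$ and $(A2)$ are automatic, verifies $(A3)$, and then quotes the proof of $(1)\Rightarrow(2)$ of Theorem \ref{theorem mult reg} to get $b\;\mathcal{H}^+\;a$. The two equivalences you defer as ``the heart of the matter'' are precisely the computations the paper carries out with the tools you name: $ab\;\mathcal{H}^+\;ab^2a=ba^2b\;\mathcal{H}^+\;ba$, using that the idempotents $ab$ and $ba$ of $(S,\cdot)$ commute; and $(a+a^*)b=(b+b^*)a$ (both are additive idempotents $\mathcal{H}^+$-related to $ab$, and each class holds a unique additive idempotent), which combined with commutativity of the multiplicative idempotents $a+a^*$ and $b+b^*$ gives $(a+a^*)b=(b+b^*)(a+a^*)\;\mathcal{H}^+\;b(a+a^*)$. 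Your bridging observation (a regular subsemigroup of an inverse semigroup is inverse) is correct and makes explicit a final step the paper leaves tacit. So for $(1)\Rightarrow(2)$ you have the right route, merely unexecuted.

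The genuine gap is in your converse, and it is not fixable: the step you defer --- showing that the per-class inverse hypothesis forces idempotents $g,h$ from \emph{different} $\mathcal{H}^+$-classes to commute --- would fail, because the implication $(2)\Rightarrow(1)$ (equivalently $(3)\Rightarrow(1)$) is false as stated. Take the paper's own $(L,+,\ast)$ from Example \ref{example multiplicatively regular GLCR semiring}: the two-element semilattice with $x\ast y=x$. Its $\mathcal{H}^+$-classes are the singletons $\{\alpha\}$ and $\{\beta\}$, each a trivial (hence inverse, even zero-symmetric) near-ring, so $(2)$ and $(3)$ hold; yet $(L,\ast)$ is a two-element left-zero band in which $\alpha\ast\beta=\alpha\neq\beta=\beta\ast\alpha$, so idempotents do not commute and $\alpha$ has the two distinct inverses $\alpha$ and $\beta$ --- $(1)$ fails. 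The defect is not an artifact of trivial components: $S=L\times\mathbb{Z}_2$ with coordinatewise addition and $(x,m)(y,n)=(x,mn)$ is a disjoint union of two copies of the inverse ring $\mathbb{Z}_2$ but is not multiplicatively inverse, by the same pair of idempotents $(\alpha,1),(\beta,1)$. Note that the paper never actually proves this direction either: its entire justification of $(3)\Rightarrow(1)$ is a citation of Theorem $2.19$ of \cite{Tuhin2}, which yields only that $S$ is GLCR and multiplicatively \emph{regular}; nothing there controls products across distinct $\mathcal{H}^+$-classes. So your instinct in isolating cross-class commutativity as the crux was sharper than the paper's one-line dismissal, but no combination of the right-congruence property and the band $E^+(S)$ can deliver it, because the seminearring axioms simply do not constrain the multiplication between distinct classes enough; only $(1)\Rightarrow(2)\Rightarrow(3)$ survives without additional hypotheses.
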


\begin{proof}
  $(1)\Rightarrow (2)$:
  Suppose $(1)$ holds. Let $a\in S$.
  Then since in view of Theorem 2.19 of \cite{Tuhin2} every $\mathcal{H}^+$ -class is a near-ring, we have $a$~~$\mathcal{H}^+$~~$a^2$.
  Also there exists a unique $b\in S$ satisfying the conditions $aba=a$ and $bab=b$.
  Now since $\mathcal{H}^+$ is a right congruence, $ab^2a$~~$\mathcal{H}^+$~~$a^2b^2a$, {\it i.e.}, $ab^2a$~~$\mathcal{H}^+$~~$a(ab)(ba)$ which can be written as $ab^2a$~~$\mathcal{H}^+$~~$a(ba)(ab)$ (since $ab$ and $ba$ both are idempotents of the inverse semigroup $(S,\cdot)$).
  Hence $ab^2a$~~$\mathcal{H}^+$~~$a^2b$. Again in view of the facts that $a$~~$\mathcal{H}^+$~~$a^2$ and $\mathcal{H}^+$ is a right congruence we have $a^2b$~~$\mathcal{H}^+$~~$ab$. So $(ab, ab^2a) \in \mathcal{H}^+$.
  Also since $b$~~$\mathcal{H}^+$~~$b^2$, we have $ba^2b$~~$\mathcal{H}^+$~~$b^2a^2b$, {\it i.e.}, $ba^2b$~~$\mathcal{H}^+$~~$b(ba)(ab)$,
  {\it i.e.}, $ba^2b$~~$\mathcal{H}^+$~~$b(ab)(ba)$ whence $ba^2b$~~$\mathcal{H}^+$~~$b^2a$. Also $b^2a$~~$\mathcal{H}^+$~~$ba$.
  Thus $(ba^2b, ba)\in \mathcal{H}^+$.
  Now since $ab$ and $ba$ both are idempotents of the inverse semigroup $(S,\cdot)$, $ab^2a$ $=$ $ba^2b$ whence $ab$ $\mathcal{H}^+$ $ba$.
  Again $(a+a^*)\cdot b$~$\in$~$\mathcal{H}_{ab}^+$ and $(b+b^*)\cdot a$~$\in$~$\mathcal{H}_{ba}^+$ $=$ $\mathcal{H}_{ab}^+$, where according to Lemma 2.22 of \cite{Tuhin2}, for each $a\in S$, $a^*$ denotes the unique element in $\mathcal{H}_{a}^+$ such that $a + a^* + a$ $=$ $a$, $a + a^*$ $=$ $a^* + a$ and $(a + a^*)a$ $=$ $a + a^*$.
  Also $(a+a^*)\cdot b$, $(b+b^*)\cdot a$ $\in$ $E^+(S)$. Since every $\mathcal{H}^+$ class contains a unique additive idempotent, therefore $(a+a^*)\cdot b$ $=$ $(b+b^*)\cdot a$.
  Now $(a+a^*)$, $(b+b^*)$, being zero elements of some $\mathcal{H}^+$ classes, are idempotents in the inverse semigroup $(S,\cdot)$ and so $(a+a^*)$ $(b+b^*)$ $=$ $(b+b^*)$ $(a+a^*)$
  Therefore, $(a+a^*)$ $(b+b^*)a$ $=$ $(b+b^*)$ $(a+a^*)a$,
  {\it i.e.}, $(a+a^*)$ $(a+a^*)b$ $=$ $(b+b^*)$ $(a+a^*)a$ (as $(a+a^*)\cdot b$ $=$ $(b+b^*)\cdot a$),
  {\it i.e.}, $(a+a^*)b$ $=$ $(b+b^*)$ $(a+a^*)$.
  Again as $\mathcal{H}^+$ is a right congruence, we have $(b+b^*)(a+a^*)$ $\mathcal{H}^+$ $b(a+a^*)$.
  Hence $(a+a^*)b$ $\mathcal{H}^+$ $b(a+a^*)$.
  Then in view of the proof of $(1)\Rightarrow (2)$ of Theorem \ref{theorem mult reg}, we have $(a,ab)$, $(ab,ba)\in \mathcal{H}^+$.
Therefore, $b$ $=$ $bab$~~$\mathcal{H}^+$~~$a$.
$(2)\Rightarrow (3)$ is obvious.
$(3)\Rightarrow (1)$ holds in view of Theorem 2.19 of \cite{Tuhin2}.
\end{proof}

The following result is the zero-symmetric counterpart of Theorem \ref{theorem mult inverse} whose proof follows from Theorem 2.25 \cite{Tuhin2} and Theorem \ref{theorem mult inverse}.

\begin{theorem}\label{theorem mult inverse zero symm version}
Let $S$ be a seminearring. Then the following statements are equivalent:
  \begin{itemize}
    \item [(1)] $S$ is both generalized left completely regular (GLCR) and generalized right completely regular (GRCR) which is multiplicatively inverse seminearring as well;
    \item [(2)] Every $\mathcal{H}^+$ -class is a zero-symmetric, inverse near-ring;
    \item [(3)] $S$ is a union (disjoint) of zero-symmetric, inverse near-rings.
  \end{itemize}
\end{theorem}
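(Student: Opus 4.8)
The plan is to read statement $(1)$ as the conjunction of two independently characterized properties and to exploit the fact, recorded in the Remark above, that whatever decomposition of a GLCR seminearring into a union of near-rings one takes, the component near-rings are forced to be exactly the $\mathcal{H}^+$-classes of $S$. This means the ``zero-symmetric'' condition and the ``inverse'' condition are being imposed on one and the same family of components, so the two earlier characterizations can simply be overlaid. Concretely, I would prove the cycle $(1)\Rightarrow(2)\Rightarrow(3)\Rightarrow(1)$, at each arrow splitting the hypothesis into its inverse part (handled by Theorem \ref{theorem mult inverse}) and its zero-symmetry part (handled by Theorem $2.25$ of \cite{Tuhin2}).

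For $(1)\Rightarrow(2)$, first I would observe that $S$ being GLCR and multiplicatively inverse is precisely statement $(1)$ of Theorem \ref{theorem mult inverse}, so its implication $(1)\Rightarrow(2)$ gives that every $\mathcal{H}^+$-class is an inverse near-ring. Next, $S$ being both GLCR and GRCR is the hypothesis of Theorem $2.25$ of \cite{Tuhin2}, which yields that every $\mathcal{H}^+$-class is a zero-symmetric near-ring. Since both conclusions speak about the same $\mathcal{H}^+$-classes, each such class is simultaneously inverse and zero-symmetric, which is exactly $(2)$.

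The implication $(2)\Rightarrow(3)$ is immediate, since the $\mathcal{H}^+$-classes partition $S$ and therefore exhibit $S$ as a disjoint union of zero-symmetric inverse near-rings. For $(3)\Rightarrow(1)$ I would again separate the two features of the components: viewing $S$ as a union of inverse near-rings and invoking $(3)\Rightarrow(1)$ of Theorem \ref{theorem mult inverse} shows that $S$ is GLCR and multiplicatively inverse, while viewing the same components merely as zero-symmetric near-rings and invoking Theorem $2.25$ of \cite{Tuhin2} shows that $S$ is both GLCR and GRCR. Conjoining these gives statement $(1)$.

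The proof contains no genuinely new computation; its only delicate point, and hence the step I would treat most carefully, is the claim that the two cited theorems decompose $S$ along the very same family of near-rings. The hard part is therefore to make explicit that the component near-rings supplied by Theorem $2.25$ of \cite{Tuhin2} and by Theorem \ref{theorem mult inverse} coincide with the $\mathcal{H}^+$-classes, so that the adjectives ``zero-symmetric'' and ``inverse'' attach to one common partition rather than to two a priori different decompositions; once this is noted, the equivalence follows by pure bookkeeping.
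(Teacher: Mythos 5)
Your proposal is correct and follows essentially the same route as the paper, which proves this theorem precisely by combining Theorem $2.25$ of \cite{Tuhin2} with Theorem \ref{theorem mult inverse}. Your added care in noting that both cited results attach their adjectives to the same partition into $\mathcal{H}^+$-classes is exactly the (unstated) glue the paper relies on, so nothing further is needed.
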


The following theorem is the completely regular analogue of Theorem \ref{theorem mult inverse}.

\begin{theorem}\label{theorem 1}
   Let $S$ be a seminearring. Then the following statements are equivalent:
  \begin{itemize}
    \item [(1)] $S$ is generalized left completely regular (GLCR) as well as multiplicatively completely regular;
    \item [(2)] Every $\mathcal{H}^+$ -class is a completely regular near-ring;
    \item [(3)] $S$ is a union (disjoint) of completely regular near-rings.
  \end{itemize}
\end{theorem}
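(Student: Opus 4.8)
The plan is to follow the same three-cornered scheme used for Theorems \ref{theorem mult reg} and \ref{theorem mult inverse}, with the two easy implications dispatched first. The implication $(2)\Rightarrow(3)$ is immediate, since a GLCR seminearring is the disjoint union of its $\mathcal{H}^+$-classes. For $(3)\Rightarrow(1)$, if $S$ is a disjoint union of completely regular near-rings then $S$ is GLCR by Theorem $2.19$ of \cite{Tuhin2}, while multiplicative complete regularity is an elementwise existential condition and is therefore inherited from the components: any $a\in S$ lies in some component $N_\beta$, so there is $b\in N_\beta\subseteq S$ with $aba=a$ and $ab=ba$, whence $(S,\cdot)$ is multiplicatively completely regular. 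Thus the content lies entirely in $(1)\Rightarrow(2)$.

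For $(1)\Rightarrow(2)$ I would fix $a\in S$ and put $N=\mathcal{H}^+_a$, which is a near-ring by Theorem $2.19$ of \cite{Tuhin2}. Multiplicative complete regularity places $a$ in a multiplicative subgroup with identity $e$ and group inverse $b$, so that $aba=a$, $bab=b$, $ab=ba=e$, and $ae=ea=a$, $be=eb=b$. The goal is to show $b\in N$, i.e. $a\mathrel{\mathcal{H}^+}b$; once this holds, $N$ contains together with each of its elements the corresponding commuting group inverse, so its multiplicative reduct is completely regular, which is precisely what it means for $N$ to be a completely regular near-ring (cf. Note \ref{note}).

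The key computation is short. Since the near-ring $N$ is closed under multiplication, $a^2\in N$ and so $a\mathrel{\mathcal{H}^+}a^2$; as $\mathcal{H}^+$ is a right congruence (Lemma $2.20$ of \cite{Tuhin2}), multiplying on the right by $b$ gives $ab\mathrel{\mathcal{H}^+}a^2b$. But $a^2b=a(ab)=ae=a$, so $e=ab\mathrel{\mathcal{H}^+}a$. Symmetrically, $b\mathrel{\mathcal{H}^+}b^2$ and right multiplication by $a$ yield $ba\mathrel{\mathcal{H}^+}b^2a=b(ba)=be=b$, so $e=ba\mathrel{\mathcal{H}^+}b$. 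Since $ab=ba=e$, transitivity of $\mathcal{H}^+$ gives $a\mathrel{\mathcal{H}^+}b$, as desired.

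The point worth stressing is that this case is genuinely gentler than the inverse case of Theorem \ref{theorem mult inverse}: there the idempotents $ab$ and $ba$ need not coincide, which forces the detour through the additive identities $a+a^*$ and $b+b^*$ in order to prove $ab\mathrel{\mathcal{H}^+}ba$. Here the defining identity $ab=ba=e$ collapses the mixed products $a^2b$ and $b^2a$ directly to $a$ and $b$ via $ae=a$ and $be=b$, so no additive-idempotent argument is needed. The only things I would check with care are that the subgroup identity $e$ is correctly identified with both $ab$ and $ba$, that $ae=a$ and $be=b$ then follow, and that each application of the right-congruence property is made on the correct (right) side.
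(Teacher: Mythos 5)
Your proof is correct, and for the main implication $(1)\Rightarrow(2)$ it takes a genuinely different and leaner route than the paper. The paper fixes $y\in\mathcal{H}^+_a$ and works through the additive quasi-inverse $y^*$ supplied by Lemma 2.22 of \cite{Tuhin2}: from $y\mathrel{\mathcal{H}^+}y^*$ it deduces $y\mathrel{\mathcal{H}^+}y^*c_yy$ by right-multiplying by $c_yy$, then uses the near-ring closure $y^*y\in\mathcal{H}^+_a$ together with the commuting property $yc_y=c_yy$ to get $y^*c_yy\mathrel{\mathcal{H}^+}yc_y$, and chains these relations to $y\mathrel{\mathcal{H}^+}yc_y\mathrel{\mathcal{H}^+}c_y$. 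You bypass the additive structure entirely: from the closure facts $a\mathrel{\mathcal{H}^+}a^2$ and $b\mathrel{\mathcal{H}^+}b^2$ (each $\mathcal{H}^+$-class being a near-ring by Theorem 2.19 of \cite{Tuhin2}) plus the right-congruence property of Lemma 2.20, you obtain $ab\mathrel{\mathcal{H}^+}a^2b=a$ and $ba\mathrel{\mathcal{H}^+}b^2a=b$, and the subgroup identity $e=ab=ba$ closes the loop $a\mathrel{\mathcal{H}^+}e\mathrel{\mathcal{H}^+}b$; your remark that treating an arbitrary $a$ suffices (each $y\in N$ has $\mathcal{H}^+_y=N$) correctly disposes of the quantifier over elements of the class. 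Both arguments rest on the same two external inputs (Theorem 2.19 and Lemma 2.20 of \cite{Tuhin2}), but yours makes visible that the completely regular case needs no additive-idempotent bookkeeping at all --- exactly the point you flag in contrasting with Theorem \ref{theorem mult inverse}, where $ab\neq ba$ in general forces the detour through $a+a^*$ and $b+b^*$; indeed your opening step ($a\mathrel{\mathcal{H}^+}a^2$, then right-multiply) is the same move that begins the paper's proof of that theorem, while the paper's proof of the present theorem instead stays uniform with its $y^*$-based computations elsewhere. Your dispatch of $(2)\Rightarrow(3)$ (the $\mathcal{H}^+$-classes partition $S$) and $(3)\Rightarrow(1)$ (GLCR from Theorem 2.19, multiplicative complete regularity inherited elementwise from the components) agrees with the paper, which cites Theorem 2.19 for both without elaboration.
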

\begin{proof}
  $(1)\Rightarrow (2)$:
  Suppose $(1)$ holds.
  Then by Theorem 2.19 of \cite{Tuhin2} every $\mathcal{H}^+$ -class is a near-ring. Let $a\in S$ and $y \in \mathcal{H}_{a}^+$. Then by Lemma 2.22 of \cite{Tuhin2} there exists a unique element $y^* \in \mathcal{H}_{a}^+$ such that $y + y^* + y$ $=$ $y$, $y + y^*$ $=$ $y^* + y$ and $(y + y^*)y$ $=$ $y + y^*$. Since $S$ is multiplicatively completely regular, there exists a unique $c_{y}\in S$ such that $yc_{y}y$ $=$ $y$, $c_{y}yc_{y}$ $=$ $c_{y}$ and $c_{y}y$ $=$ $yc_{y}$.
  Hence $(y + y^* + y)c_{y}y$ = $yc_{y}y$, {\it i.e.},
  $y + y^*c_{y}y + y = y.$
  Also,
$(y^* + y + y^*)c_{y}y$ = $y^*c_{y}y$
 implies that $y^*c_{y}y + y + y^*c_{y}y = y^*c_{y}y$.
   Now $y \mathcal{H^+} y^*$. Since $\mathcal{H}^+$ is a right congruence ({\it cf.} Lemma 2.20 of \cite{Tuhin2}), $yc_{y}y\mathcal{H^+}y^*c_{y}y$. So $y\mathcal{H^+}y^*c_{y}y$. 
  As $y^*$, $y\in \mathcal{H}_{a}^+$ and $(\mathcal{H}_{a}^+,+,\cdot)$ is a near-ring, so $y^*y\in \mathcal{H}_{a}^+$.
  Now $y^*y$ $\mathcal{H}^+$ $y$ implies that $y^*yc_{y}$ $\mathcal{H}^+$ $yc_{y}$ ($\because\mathcal{H^+}$ is a right congruence).
Hence $y^*c_{y}y$ $\mathcal{H}^+$ $yc_{y}$ ($\because yc_{y}=c_{y}y$).
Thus $y$ $\mathcal{H}^+$ $yc_{y}$ ($\because$ $y\mathcal{H^+}y^*c_{y}y$).
Therefore, $y$ $\mathcal{H}^+$ $c_{y}y$
whence $yc_{y}$ $\mathcal{H}^+$ $c_{y}yc_{y}$.
So $y$ $\mathcal{H}^+$ $yc_{y}$ and $yc_y$ $\mathcal{H}^+$ $c_{y}$.
 Therefore $c_{y}\in \mathcal{H}_{a}^+$. Hence $(\mathcal{H}_{a}^+, +, \cdot)$ is a completely regular near-ring.
$(2)\Rightarrow (3)$ is obvious.
$(3)\Rightarrow (1)$ holds in view of Theorem 2.19 of \cite{Tuhin2}.
\end{proof}
The following result is the zero-symmetric counterpart of Theorem \ref{theorem 1} whose proof follows from Theorem 2.25 \cite{Tuhin2} and Theorem \ref{theorem 1}.
\begin{theorem}\label{theorem 2}
   Let $S$ be a seminearring. Then the following statements are equivalent:
  \begin{itemize}
    \item [(1)] $S$ is both generalized left completely regular (GLCR) and generalized right completely regular (GRCR) as well as multiplicatively completely regular;
    \item [(2)] Every $\mathcal{H}^+$ -class is a completely regular, zero-symmetric near-ring;
    \item [(3)] $S$ is a union (disjoint) of completely regular, zero-symmetric near-rings.
  \end{itemize}
\end{theorem}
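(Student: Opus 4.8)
The plan is to obtain Theorem \ref{theorem 2} not by a fresh computation but by overlaying the two structural results already in hand: the completely regular characterization of Theorem \ref{theorem 1} and the zero-symmetric decomposition furnished by Theorem 2.25 of \cite{Tuhin2}. The organizing observation is that all three statements of the theorem speak about one and the same family of $\mathcal{H}^+$-classes: by the Remark following Example \ref{example multiplicatively regular GLCR semiring}, in \emph{any} decomposition of a GLCR seminearring into a union of near-rings the component near-rings are forced to be exactly the $\mathcal{H}^+$-classes. Consequently ``completely regular'' and ``zero-symmetric'' are two independent conditions imposed on the very same collection of component near-rings, and I can establish each condition separately from its own source theorem.

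For $(1)\Rightarrow(2)$ I would argue as follows. Assume $S$ is GLCR, GRCR, and multiplicatively completely regular. Viewing $S$ merely as a GLCR seminearring that is multiplicatively completely regular, Theorem \ref{theorem 1} gives that every $\mathcal{H}^+$-class is a completely regular near-ring. Viewing $S$ instead as a seminearring that is simultaneously GLCR and GRCR, Theorem 2.25 of \cite{Tuhin2} exhibits it as a union of zero-symmetric near-rings; since those components coincide with the $\mathcal{H}^+$-classes, each $\mathcal{H}^+$-class is zero-symmetric. Hence every $\mathcal{H}^+$-class is at once a completely regular and a zero-symmetric near-ring. The implication $(2)\Rightarrow(3)$ is then immediate, as $S$ is the disjoint union of its $\mathcal{H}^+$-classes.

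For $(3)\Rightarrow(1)$ I would reverse the same bookkeeping. Starting from a disjoint union $S=\bigcup_\alpha N_\alpha$ of completely regular, zero-symmetric near-rings, I would first forget the zero-symmetry and apply $(3)\Rightarrow(1)$ of Theorem \ref{theorem 1} to conclude that $S$ is GLCR and multiplicatively completely regular; then, forgetting complete regularity and retaining only zero-symmetry, I would apply Theorem 2.25 of \cite{Tuhin2} to conclude that $S$ is both GLCR and GRCR. Conjoining these gives precisely statement $(1)$.

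I do not expect a genuine obstacle here, since the mathematical content sits entirely inside the two cited theorems. The one point I would take care to state explicitly — and the only place a careless reader could stumble — is the identification of partitions: \emph{a priori} the decomposition of $S$ into zero-symmetric near-rings and its decomposition into completely regular near-rings could be different, but both must refine to, and in fact equal, the partition by $\mathcal{H}^+$-classes. Pinning this matching down is what makes it legitimate to attribute both properties to the same components, and it is the step I would write out carefully to keep the argument airtight.
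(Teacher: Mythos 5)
Your proposal is correct and is essentially the paper's own proof: the paper states that Theorem \ref{theorem 2} ``follows from Theorem 2.25 \cite{Tuhin2} and Theorem \ref{theorem 1}'', which is exactly the overlay you carry out. Your explicit remark that both decompositions must coincide with the partition into $\mathcal{H}^+$-classes simply spells out the identification the paper leaves implicit.
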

To conclude the paper we obtain the following result, the Clifford analogue of Theorem \ref{theorem mult inverse}, by combining Theorems \ref{theorem mult inverse}, \ref{theorem mult inverse zero symm version}, \ref{theorem 1}, \ref{theorem 2} together with the fact that every Clifford near-ring is zero-symmetric.
\begin{theorem}\label{theorem Clifford}
Let $S$ be a seminearring. Then the following statements are equivalent:
  \begin{itemize}
   \item [(1)] $S$ is GLCR as well as multiplicatively Clifford seminearring;
    \item [(2)] $S$ is both GLCR , GRCR as well as multiplicatively Clifford seminearring;
    \item [(3)] Every $\mathcal{H}^+$ -class is a Clifford near-ring;
    \item [(4)] $S$ is a union (disjoint) of Clifford near-rings.
  \end{itemize}
\end{theorem}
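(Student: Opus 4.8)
The plan is to reduce everything to the four preceding theorems by exploiting two facts. The first is purely semigroup-theoretic: a semigroup is Clifford exactly when it is simultaneously inverse and completely regular (a Clifford semigroup being a semilattice of groups, equivalently a completely regular inverse semigroup). Hence ``$S$ is multiplicatively Clifford'' is equivalent to ``$S$ is multiplicatively inverse and multiplicatively completely regular,'' and a near-ring is Clifford precisely when it is both an inverse near-ring and a completely regular near-ring. The second fact is that every Clifford near-ring is zero-symmetric: in a Clifford near-ring $N$ right distributivity gives $0\cdot x=(0+0)\cdot x=0\cdot x+0\cdot x$, so $0\cdot x=0$; since $0\cdot 0=0$, the additive zero is a multiplicative idempotent, hence central in the Clifford semigroup $(N,\cdot)$, and therefore $x\cdot 0=0\cdot x=0$ for all $x$. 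I would isolate both facts at the outset and then run the cycle $(1)\Rightarrow(2)\Rightarrow(3)\Rightarrow(4)\Rightarrow(1)$.

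For $(2)\Rightarrow(3)$, assuming $S$ is GLCR, GRCR and multiplicatively Clifford, I would apply $(1)\Rightarrow(2)$ of Theorem \ref{theorem mult inverse zero symm version} (via multiplicative inverseness) to see that every $\mathcal{H}^+$-class is a zero-symmetric inverse near-ring, and $(1)\Rightarrow(2)$ of Theorem \ref{theorem 2} (via multiplicative complete regularity) to see that every $\mathcal{H}^+$-class is a completely regular zero-symmetric near-ring; intersecting these, each $\mathcal{H}^+$-class is a Clifford near-ring, which is $(3)$. The step $(3)\Rightarrow(4)$ is immediate, since the $\mathcal{H}^+$-classes partition $S$. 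For $(4)\Rightarrow(1)$, if $S$ is a disjoint union of Clifford near-rings, then each component is at once an inverse near-ring and a completely regular near-ring, so $(3)\Rightarrow(1)$ of Theorem \ref{theorem mult inverse} makes $S$ GLCR and multiplicatively inverse, while $(3)\Rightarrow(1)$ of Theorem \ref{theorem 1} makes it multiplicatively completely regular; by the first fact $S$ is then GLCR and multiplicatively Clifford.

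The delicate step — and the one that genuinely needs the zero-symmetry fact — is $(1)\Rightarrow(2)$, where GRCR must be manufactured out of GLCR alone. Starting from GLCR together with multiplicative Cliffordness, I would apply $(1)\Rightarrow(2)$ of Theorem \ref{theorem mult inverse} and $(1)\Rightarrow(2)$ of Theorem \ref{theorem 1} to conclude that every $\mathcal{H}^+$-class is both an inverse and a completely regular near-ring, hence a Clifford near-ring. By the zero-symmetry fact each such class is a zero-symmetric near-ring, so $S$ is a disjoint union of zero-symmetric near-rings; Theorem $2.25$ of \cite{Tuhin2} then upgrades $S$ to being simultaneously GLCR and GRCR, and since it remains multiplicatively Clifford we obtain $(2)$.

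The main obstacle I anticipate is precisely this bootstrapping of the two-sided GRCR conclusion from the one-sided GLCR hypothesis in $(1)\Rightarrow(2)$; everything else is a mechanical recombination of the earlier theorems. I would therefore make sure the zero-symmetry computation for Clifford near-rings is airtight, and verify that the semigroup equivalence ``Clifford $=$ inverse $\cap$ completely regular'' is invoked consistently at both the level of the whole multiplicative reduct $(S,\cdot)$ and the level of the individual $\mathcal{H}^+$-classes, so that the applications of Theorems \ref{theorem mult inverse}, \ref{theorem mult inverse zero symm version}, \ref{theorem 1} and \ref{theorem 2} all refer to the same intrinsic decomposition of $S$.
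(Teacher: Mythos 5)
Your proposal is correct and follows essentially the same route as the paper, which proves this theorem precisely by combining Theorems \ref{theorem mult inverse}, \ref{theorem mult inverse zero symm version}, \ref{theorem 1} and \ref{theorem 2} with the fact that every Clifford near-ring is zero-symmetric. The only addition is that you supply an explicit (and correct) proof of the zero-symmetry fact --- $0\cdot x=0$ by right distributivity and $x\cdot 0=0\cdot x$ by centrality of the idempotent $0$ in the Clifford semigroup $(N,\cdot)$ --- which the paper merely asserts.
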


\end{document}